\documentclass[11pt,letterpaper]{article}

\usepackage[margin=1in]{geometry}
\usepackage[T1]{fontenc}
\usepackage{lmodern}
\usepackage{microtype}
\usepackage{amsmath,amssymb,amsthm,mathtools}
\usepackage{booktabs}
\usepackage{array}
\usepackage{enumitem}
\usepackage{needspace}
\usepackage{placeins}
\usepackage{xcolor}
\usepackage{tikz}
\usetikzlibrary{arrows.meta,calc,angles,quotes}
\usepackage[colorlinks=true,linkcolor=blue!55!black,urlcolor=blue!55!black]{hyperref}
\hypersetup{
  pdftitle={Straightedge-and-Compass Constructibility of the Reciprocal-k-th-Power Law},
  pdfauthor={George M. Georgiou}
}

\definecolor{constructionblue}{RGB}{30,92,150}
\definecolor{auxgray}{RGB}{120,125,130}
\definecolor{answerorange}{RGB}{205,105,25}

\newtheorem{theorem}{Theorem}
\newtheorem{proposition}[theorem]{Proposition}
\theoremstyle{remark}
\newtheorem{remark}[theorem]{Remark}

\title{\bfseries
Straightedge-and-Compass Constructibility\\[2pt]
of the Reciprocal-$k$-th-Power Law}
\author{\small George M. Georgiou\thanks{\small School of Computer Science and Engineering, California State University, San Bernardino}\\[3pt]
\small \href{mailto:georgiou@csusb.edu}{georgiou@csusb.edu}}
\date{}

\begin{document}
\maketitle
\vspace{-2.5em}

\begin{abstract}
Given positive lengths $a$ and $b$ and a positive integer $k$, let
$c$ be determined by
$$
  \frac{1}{c^k}=\frac{1}{a^k}+\frac{1}{b^k}.
$$
We prove that a single finite unmarked-straightedge-and-compass
construction producing $c$ for every pair $a,b$ exists if and only
if $k$ is a power of two.  Necessity follows by specializing to
$a=b=1$ and applying the algebraic-degree obstruction to
$2^{1/k}$; sufficiency is established by a recursive construction
using third, fourth, and mean proportionals.  We give an explicit
construction for $k=4$ and its iteration for $k=8,16,\ldots$.
We also describe a plane construction, intersecting a line with a
Lam\'e curve, that realizes the same relation geometrically for
every real $k>1$, although generally not by classical
straightedge-and-compass operations.
\end{abstract}

\noindent{\footnotesize\textit{Key words and phrases.}
straightedge-and-compass construction, constructibility, power of two,
reciprocal Pythagorean relation, Lam\'e curve, duplication of the cube.}

\section{The problem}

Let \(a\), \(b\), and \(c\) be positive lengths.  The reciprocal-\(k\)-th-power
identity \(1/c^{k}=1/a^{k}+1/b^{k}\) is equivalent to
\begin{equation}\label{eq:c-formula}
  c=\frac{ab}{(a^k+b^k)^{1/k}}.
\end{equation}
Here ``straightedge and compass'' has its classical meaning: the straightedge
is unmarked, and the only permitted intersections are those of lines and
circles already constructed.

There are two distinct questions:
\begin{enumerate}[label=(\roman*),leftmargin=2.4em]
  \item Does a geometric curve produce the length \(c\)?
  \item Can \(c\) be obtained from arbitrary given segments \(a\) and \(b\)
        by finitely many classical straightedge-and-compass operations?
\end{enumerate}
Intersection with a Lam\'e curve (Section~\ref{sec:lame})
answers the first question affirmatively for every real \(k>1\).  The second
question has a much more restrictive answer.

\section{The exact constructibility theorem}

\begin{theorem}[Complete answer for integer exponents]\label{thm:classification}
Let \(k\) be a positive integer.  There is a single finite
straightedge-and-compass construction that, from every pair of positive input
segments \(a,b\), produces a segment \(c\) satisfying
\[
  \frac1{c^k}=\frac1{a^k}+\frac1{b^k},
  \qquad\text{equivalently}\qquad
  c=\frac{ab}{(a^k+b^k)^{1/k}},
\]
if and only if \(k=2^m\) for some nonnegative integer \(m\).
\end{theorem}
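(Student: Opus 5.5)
We prove the two directions separately, and the necessity direction is the easier one. Suppose a single finite construction works for every pair \(a,b\). Apply it to the particular input \(a=b=1\), where we fix the unit as the given segment. Every length it produces then lies in a tower of quadratic extensions of \(\mathbb{Q}\), so its degree over \(\mathbb{Q}\) is a power of two. The output is \(c=2^{-1/k}\). By Eisenstein's criterion at the prime \(2\), the polynomial \(x^k-2\) is irreducible over \(\mathbb{Q}\), so \([\mathbb{Q}(2^{1/k}):\mathbb{Q}]=k\). Hence \(k\) must be a power of two. One point needs care: the construction may use auxiliary choices, such as arbitrary points. We handle this by noting that, for the specialized input, those choices can be taken rational (or generic), and the output length does not depend on them. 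Alternatively, the degree argument goes through over \(\mathbb{Q}(\text{choices})\), using transcendence of the generic choices: \(x^k-2\) stays irreducible over a purely transcendental extension.

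For sufficiency we argue by induction on \(m\) and write \(c_k(a,b)\) for the required length. The base cases are direct. For \(k=1\), \(c=ab/(a+b)\) is a fourth proportional: \((a+b):a=b:c\). For \(k=2\), we construct \(h=\sqrt{a^2+b^2}\) as the hypotenuse of a right triangle with legs \(a,b\), and then \(c=ab/h\) as a fourth proportional. In that triangle \(c\) is just the altitude to the hypotenuse.

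The inductive step uses the identity
\[
c_{2k}(a,b)=\sqrt{\,c_k(a^2,b^2)\,}\quad\text{in homogeneous form,}
\]
which we have to make homogeneous because we work with lengths. Fix a unit-free version as follows. Let \(a'=a^2/b\) be the third proportional (\(b:a=a:a'\)), and let \(b'=b\). Then
\[
c_{2k}(a,b)^{2k}=\frac{a^{2k}b^{2k}}{a^{2k}+b^{2k}},
\]
and a short computation shows that \(c_{2k}(a,b)\) is the mean proportional of \(b\) and \(c_k(a',b')\cdot(\text{factor})\). Rather than guess the factor, we use a cleaner route. Set \(t=c_k(a^2/s,\,b^2/s)\) for an auxiliary segment \(s\); we may take \(s=a\). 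Then
\[
t^k=\frac{a^{2k}b^{2k}/s^{2k}}{(a^{2k}+b^{2k})/s^k}=\frac{c_{2k}^{2k}}{s^k},
\]
so \(c_{2k}=\sqrt{st}\), which is a mean proportional. Each step, namely the third proportionals \(a^2/s\) and \(b^2/s\), the recursive \(c_k\), and the mean proportional, is a uniform straightedge-and-compass construction. So the construction for \(k=2^m\) is one fixed finite procedure valid for all \(a,b\).

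The main obstacle is making the necessity direction fully rigorous about what a ``single construction'' may do, in particular arbitrary point choices and case branches. We settle this by working with the specialized input and invoking the standard field-degree theorem, which we treat as known.
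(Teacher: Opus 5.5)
Your proof is correct. The necessity half is the paper's argument: specialize to $a=b=1$, apply Eisenstein to $X^k-2$, and use the power-of-two degree criterion. You work with $c=2^{-1/k}$ directly, which is legitimate because $\mathbb{Q}(2^{-1/k})=\mathbb{Q}(2^{1/k})$. You also treat arbitrary auxiliary points, which the paper leaves implicit.

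The sufficiency half is organized differently. The paper unrolls everything into closed forms:
\begin{itemize}
  \item third proportionals $x_j=a^{2^j}/b^{2^j-1}$;
  \item one hypotenuse;
  \item mean proportionals with $b$ until it reaches $(a^k+b^k)^{1/k}$;
  \item a single division $c=ab/y_{m-1}$ at the very end.
\end{itemize}
You instead prove the homogeneous halving identity $c_{2k}(a,b)^2=s\,c_k(a^2/s,b^2/s)$ and induct on $m$. You treat the universal $c_k$-construction as a black box applied to new positive inputs. This makes uniformity in $a,b$ a direct consequence of the induction hypothesis, and it needs only the trivial base $k=1$; your separate $k=2$ base already follows from the identity. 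The paper's version, in exchange, exhibits every intermediate segment explicitly and shows the stage count $2m$ at a glance.

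The two routes are in fact very close. The ``factor'' you declined to compute in your first attempt equals $1$, i.e.\ $c_{2k}(a,b)^2=b\,c_k(a^2/b,\,b)$, which is your identity with $s=b$. Unrolling that recursion down to $k=2$ reproduces the paper's chain $x_j$ and its mean proportionals with $b$. The only change is where the division by $\sqrt{x_{m-1}^2+b^2}$ happens: at the bottom, as the altitude of the base right triangle, rather than as a final fourth proportional. In a write-up, replace the abandoned ``(factor)'' sentence with this identity or delete it.
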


\begin{proof}
First suppose that such a construction exists for every \(a,b\).  Apply it to
the two unit segments \(a=b=1\); the unit length is then present, and since the
reciprocal of a nonzero constructible length is constructible, the construction
would yield both
\[
  c=2^{-1/k}
  \qquad\text{and}\qquad
  \frac1c=2^{1/k}.
\]
The polynomial \(X^k-2\) is irreducible over \(\mathbb{Q}\) by Eisenstein's
criterion at the prime \(2\)~\cite[Cor.~14, pp.~309--310]{DummitFoote}.
Consequently \(2^{1/k}\) has algebraic degree \(k\) over \(\mathbb{Q}\).  The
standard field-theoretic criterion for straightedge-and-compass
constructibility implies that the degree of a constructible real number over
\(\mathbb{Q}\) is a power of two; this necessary condition goes back to
Wantzel~\cite[p.~369]{Wantzel}; see also~\cite[Sec.~13.3,
pp.~531--533]{DummitFoote}.  Therefore \(k\) must be a power of two.

Conversely, let \(k=2^m\).  The cases \(m=0\) and \(m=1\), namely \(k=1\) and
\(k=2\), are the familiar reciprocal-linear and altitude-to-the-hypotenuse
constructions; the latter is the reciprocal Pythagorean relation~\cite{Nelsen}.
For \(m\geq 2\), the recursive construction in
Section~\ref{sec:general} uses only third proportionals, right triangles,
mean proportionals, and a final fourth proportional.  All are standard
straightedge-and-compass constructions, and the calculation there proves that
the resulting segment has length exactly given by \eqref{eq:c-formula}.
\end{proof}

\subsection*{The obstruction already occurs at
  \texorpdfstring{\(k=3\)}{k=3}}

If \(k=3\) and \(a=b=1\), then
\[
  c=\frac{1}{\sqrt[3]{2}}.
\]
Constructing \(c\) would construct its reciprocal \(\sqrt[3]{2}\), contradicting
the classical impossibility of duplicating the
cube~\cite[p.~369]{Wantzel}; see also~\cite[Sec.~13.3, pp.~532--533]{DummitFoote}.
For an elementary account, see~\cite[Ch.~III, \S3, pp.~134--139]{CourantRobbins}.
Thus there cannot be a straightedge-and-compass construction valid for arbitrary
\(a\) and \(b\).
In particular, the absence of a construction for \(k=3\) is not a gap waiting
for a more ingenious diagram; it is an algebraic impossibility.

\begin{center}
\renewcommand{\arraystretch}{1.15}
\begin{tabular}{ccl}
\toprule
Exponent \(k\) & Classical construction? & Reason\\
\midrule
\(3\)          & No  & cube-root obstruction\\
\(4\)          & Yes & two successive square-root levels\\
\(5,6,7\)      & No  & degree not a power of two\\
\(8\)          & Yes & three successive square-root levels\\
\(9,\ldots,15\)& No  & degree not a power of two\\
\(16\)         & Yes & four successive square-root levels\\
\bottomrule
\end{tabular}
\end{center}

\begin{remark}
Theorem~\ref{thm:classification} concerns \emph{universal} constructibility: a
single procedure valid for \emph{every} pair \(a,b\).  It does not assert that
\(c\) is nonconstructible for each individual input.  When \(k\) is not a power
of two there may still be special pairs \(a,b\) for which \(c\) happens to be
constructible; the theorem says only that no one straightedge-and-compass
procedure can succeed for all inputs.  For example, at \(k=3\) take
\[
  a=\frac{3+\sqrt{93}}{7},\qquad b=1,\qquad
  c=\frac{\sqrt{93}-3}{7}:
\]
a short computation gives \(a^3-c^3=(ac)^3\), i.e. \(1/c^3=1/a^3+1/b^3\), and
all three lengths lie in the constructible field \(\mathbb{Q}(\sqrt{93})\),
even though \(3\) is not a power of two.  Such pairs arise as rational-slope
chords through the point \((0,1)\) on the Fermat cubic \(u^3-x^3=1\): the
residual intersection is governed by a \emph{quadratic}, whose roots are
constructible, and each admissible slope yields another example.
\end{remark}

\section{An explicit construction for
  \texorpdfstring{\(k=4\)}{k=4}}

The quartic case is the first new exponent after \(k=2\).  Starting with the
given segments \(a\) and \(b\), perform the following four standard construction
stages:
\begin{align}
  u&=\frac{a^2}{b},                                      \label{eq:k4-u}\\
  v&=\sqrt{u^2+b^2},                                     \label{eq:k4-v}\\
  w&=\sqrt{bv},                                          \label{eq:k4-w}\\
  c&=\frac{ab}{w}.                                       \label{eq:k4-c}
\end{align}

\Needspace*{10\baselineskip}
\begin{enumerate}[leftmargin=2.2em]
  \item Construct \(u\) as the third proportional to \(b,a\):
        \[
          b:a=a:u.
        \]
  \item Construct \(v\) as the hypotenuse of a right triangle with legs
        \(u\) and \(b\).
  \item Construct \(w\) as the mean proportional between \(b\) and \(v\).
  \item Construct \(c\) as the fourth proportional satisfying
        \[
          w:a=b:c.
        \]
\end{enumerate}

The porism to Proposition~VI.8 of Euclid's \emph{Elements} gives the
mean-proportional theorem used in Step~3.  Propositions~VI.11, VI.12, and
VI.13 construct, respectively, a third, fourth, and mean
proportional~\cite{Euclid}.

\begin{figure}[htbp]
\centering
\begin{tikzpicture}[
  scale=0.94,
  line cap=round,
  line join=round,
  >={Latex[length=2.2mm]},
  point/.style={circle,fill=black,inner sep=1.35pt},
  guide/.style={thin,auxgray},
  main/.style={very thick,constructionblue},
  parallel/.style={thick,answerorange}
]
% Panel 1: third proportional
\begin{scope}
  \coordinate (O) at (0,0);
  \coordinate (A) at (2.8,0);
  \coordinate (B) at (2.0,0);
  \coordinate (C) at (55:2.0);
  \coordinate (D) at (55:1.4286);
  \draw[guide,-{Latex[length=1.7mm]}] (O)--(3.35,0);
  \draw[guide,-{Latex[length=1.7mm]}] (O)--(55:2.55);
  \draw[main] (A)--(C);
  \draw[parallel] (B)--(D);
  \foreach \P in {O,A,B,C,D} \node[point] at (\P) {};
  \node[below left] at (O) {$O$};
  \node[above right,font=\footnotesize] at (A) {$A$};
  \node[above right,font=\footnotesize] at (B) {$B$};
  \node[above left] at (C) {$C$};
  \node[right,font=\footnotesize] at (D) {$D$};
  \draw[|<->|,thin] (0,-0.64)--(2.8,-0.64)
       node[midway,fill=white,inner sep=1pt] {$OA=b$};
  \draw[|<->|,thin] (0,-0.29)--(2.0,-0.29)
       node[midway,fill=white,inner sep=1pt,font=\footnotesize] {$OB=a$};
  \node[left,font=\footnotesize] at (55:1.80) {$OC=a$};
  \node[left,font=\footnotesize] at (55:0.85) {$OD=u$};
  \node[font=\bfseries] at (1.65,-1.28) {(a) Third proportional};
\end{scope}

% Panel 2: fourth proportional
\begin{scope}[xshift=7.3cm]
  \coordinate (O2) at (0,0);
  \coordinate (A2) at (2.9,0);
  \coordinate (B2) at (2.0,0);
  \coordinate (C2) at (55:2.7);
  \coordinate (D2) at (55:1.8621);
  \draw[guide,-{Latex[length=1.7mm]}] (O2)--(3.45,0);
  \draw[guide,-{Latex[length=1.7mm]}] (O2)--(55:3.05);
  \draw[main] (A2)--(C2);
  \draw[parallel] (B2)--(D2);
  \foreach \P in {O2,A2,B2,C2,D2} \node[point] at (\P) {};
  \node[below left] at (O2) {$O$};
  \node[above right,font=\footnotesize] at (A2) {$A$};
  \node[above right,font=\footnotesize] at (B2) {$B$};
  \node[above left] at (C2) {$C$};
  \node[right,font=\footnotesize] at (D2) {$D$};
  \draw[|<->|,thin] (0,-0.64)--(2.9,-0.64)
       node[midway,fill=white,inner sep=1pt] {$OA=w$};
  \draw[|<->|,thin] (0,-0.29)--(2.0,-0.29)
       node[midway,fill=white,inner sep=1pt,font=\footnotesize] {$OB=a$};
  \node[left,font=\footnotesize] at (55:2.05) {$OC=b$};
  \node[left,font=\footnotesize] at (55:1.05) {$OD=c$};
  \node[font=\bfseries] at (1.70,-1.28) {(b) Fourth proportional};
\end{scope}
\end{tikzpicture}
\caption{The two proportional constructions used for \(k=4\).
In (a), similarity of \(\triangle OBD\) and \(\triangle OAC\) gives
\(OD/OC=OB/OA=a/b\), and hence \(u=OD=a^2/b\).
In (b), the same construction with \(OA=w\), \(OB=a\), and \(OC=b\)
gives \(c=OD=ab/w\).  The displayed configurations use convenient sample
lengths; the order of points on a ray may change with the input lengths,
without changing the similarity argument.}
\label{fig:proportionals}
\end{figure}

\begin{figure}[htbp]
\centering
\begin{tikzpicture}[
  scale=0.95,
  line cap=round,
  line join=round,
  >={Latex[length=2.2mm]},
  point/.style={circle,fill=black,inner sep=1.35pt},
  guide/.style={thin,auxgray},
  main/.style={very thick,constructionblue},
  answer/.style={very thick,answerorange}
]
% Panel 1: right triangle
\begin{scope}
  \coordinate (O) at (0,0);
  \coordinate (A) at (1.65,0);
  \coordinate (B) at (1.65,2.65);
  \draw[main] (O)--(A)--(B)--cycle;
  \draw (1.65,0.25)--(1.40,0.25)--(1.40,0);
  \foreach \P in {O,A,B} \node[point] at (\P) {};
  \node[below left] at (O) {$O$};
  \node[below right] at (A) {$A$};
  \node[above right] at (B) {$B$};
  \node[below] at ($(O)!0.5!(A)$) {$u$};
  \node[right] at ($(A)!0.5!(B)$) {$b$};
  \node[left] at ($(O)!0.5!(B)$) {$v=\sqrt{u^2+b^2}$};
  \node[font=\bfseries] at (0.82,-0.78) {(a) Right triangle};
\end{scope}

% Panel 2: geometric mean
\begin{scope}[xshift=6.2cm]
  \def\rr{2.35}
  \def\vv{2.75}
  \pgfmathsetmacro{\tot}{\rr+\vv}
  \pgfmathsetmacro{\rad}{\tot/2}
  \pgfmathsetmacro{\cc}{sqrt(\rr*\vv)}
  \coordinate (A2) at (0,0);
  \coordinate (B2) at (\rr,0);
  \coordinate (C2) at (\tot,0);
  \coordinate (E2) at (\rr,\cc);
  \draw[main] (A2)--(C2);
  \draw[main] (A2) arc[start angle=180,end angle=0,radius=\rad];
  \draw[answer] (B2)--(E2);
  \draw (\rr,0.25)--(\rr-0.25,0.25)--(\rr-0.25,0);
  \foreach \P in {A2,B2,C2,E2} \node[point] at (\P) {};
  \node[below left] at (A2) {$A$};
  \node[below] at (B2) {$B$};
  \node[below right] at (C2) {$C$};
  \node[above] at (E2) {$E$};
  \node[below] at ($(A2)!0.5!(B2)$) {$b$};
  \node[below] at ($(B2)!0.5!(C2)$) {$v$};
  \node[right,answerorange] at ($(B2)!0.52!(E2)$)
       {$w=\sqrt{bv}$};
  \node[font=\bfseries] at (\rad,-0.78) {(b) Mean proportional};
\end{scope}
\end{tikzpicture}
\caption{The two metric operations in the quartic construction.
Panel (a) constructs \(v\) by the Pythagorean theorem.  In panel (b),
\(AC\) is the diameter of a semicircle, \(AB=b\), \(BC=v\), and
\(BE\perp AC\).  The altitude theorem gives \(BE^2=AB\cdot BC=bv\), so
\(BE=w=\sqrt{bv}\).}
\label{fig:metric}
\end{figure}

\FloatBarrier

\begin{proposition}[Verification of the quartic construction]
The segment \(c\) constructed by
\eqref{eq:k4-u}--\eqref{eq:k4-c} satisfies
\[
  \frac1{c^4}=\frac1{a^4}+\frac1{b^4}.
\]
\end{proposition}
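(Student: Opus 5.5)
The plan is a direct algebraic verification: substitute the four stages \eqref{eq:k4-u}--\eqref{eq:k4-c} into one another, obtain a closed form for \(c\), and compare it with \eqref{eq:c-formula} for \(k=4\). All lengths are positive, so every square root is the positive one. Every division is by a positive quantity (\(b\) or \(w\)), so each step is an identity between positive reals. This also lets us raise both sides to even powers without losing information.

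First compute \(v\). From \eqref{eq:k4-u} and \eqref{eq:k4-v},
\[
  v^2=\frac{a^4}{b^2}+b^2=\frac{a^4+b^4}{b^2},
\]
so \(v=\sqrt{a^4+b^4}/b\). Next, \eqref{eq:k4-w} gives \(w^2=bv=\sqrt{a^4+b^4}\), hence \(w^4=a^4+b^4\). This is the key simplification: the factor \(b\) introduced by the third proportional is exactly cancelled by the mean proportional with \(b\).

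Finally, \eqref{eq:k4-c} gives
\[
  c^4=\frac{a^4b^4}{w^4}=\frac{a^4b^4}{a^4+b^4}.
\]
Taking reciprocals yields \(1/c^4=(a^4+b^4)/(a^4b^4)=1/a^4+1/b^4\), which is the claim. Equivalently, since \(c>0\), we have \(c=ab/(a^4+b^4)^{1/4}\), matching \eqref{eq:c-formula}.

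There is no real obstacle here. The only point needing care is the geometric side: each step should produce a segment of exactly the stated length. For Steps 1 and 4 this is the similar-triangle argument of Figure~\ref{fig:proportionals}. For Step 3 it is the altitude theorem of Figure~\ref{fig:metric}(b). All of these are valid for arbitrary positive inputs, regardless of the order of points on the rays.
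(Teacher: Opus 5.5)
Your proof is correct and follows essentially the same route as the paper: substitute the stages to get \(v=\sqrt{a^4+b^4}/b\), then \(w^4=a^4+b^4\), and finally \(c=ab/w\), taking reciprocals of fourth powers. The only cosmetic difference is that you work with \(w^2\) and \(w^4\) instead of writing \(w=(a^4+b^4)^{1/4}\) directly.
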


\begin{proof}
The first two steps give
\[
  v
  =\sqrt{\frac{a^4}{b^2}+b^2}
  =\frac{\sqrt{a^4+b^4}}{b}.
\]
Consequently,
\[
  w=\sqrt{bv}=(a^4+b^4)^{1/4}.
\]
The last proportional construction therefore gives
\[
  c=\frac{ab}{w}
    =\frac{ab}{(a^4+b^4)^{1/4}},
\]
and hence
\[
  \frac1{c^4}
  =\frac{a^4+b^4}{a^4b^4}
  =\frac1{a^4}+\frac1{b^4}.
\]
\end{proof}

\section{The construction for every
  \texorpdfstring{\(k=2^m\)}{k=2 to the power m}}
\label{sec:general}

Let \(k=2^m\) with \(m\geq1\).  When \(m=1\), the two recursive families below
are empty, and the construction reduces to the familiar \(k=2\) case.  Starting
from \(x_0=a\), repeatedly construct third proportionals
\begin{equation}\label{eq:x-recursion}
  x_j=\frac{x_{j-1}^2}{b},
  \qquad j=1,\ldots,m-1.
\end{equation}
Inductively,
\[
  x_j=\frac{a^{2^j}}{b^{2^j-1}},
  \qquad
  x_{m-1}=\frac{a^{k/2}}{b^{k/2-1}}.
\]
Now construct the hypotenuse
\begin{equation}\label{eq:y0}
  y_0=\sqrt{x_{m-1}^2+b^2}
     =\frac{(a^k+b^k)^{1/2}}{b^{k/2-1}}.
\end{equation}
Next take \(m-1\) successive mean proportionals with \(b\):
\begin{equation}\label{eq:y-recursion}
  y_j=\sqrt{b\,y_{j-1}},
  \qquad j=1,\ldots,m-1.
\end{equation}
A second induction gives
\[
  y_j=
  \frac{(a^k+b^k)^{1/2^{j+1}}}
       {b^{\,k/2^{j+1}-1}}.
\]
At the last step, \(j=m-1\), this becomes
\[
  y_{m-1}=(a^k+b^k)^{1/k}.
\]
Finally, construct the fourth proportional
\[
  y_{m-1}:a=b:c.
\]
It produces
\[
  c=\frac{ab}{y_{m-1}}
   =\frac{ab}{(a^k+b^k)^{1/k}},
\]
which is precisely the required length.  In total the procedure uses \(m-1\)
third proportionals, one right triangle, \(m-1\) mean proportionals, and one
fourth proportional, that is, \(2m=2\log_2 k\) standard construction stages,
each consisting of a fixed finite number of elementary
straightedge-and-compass operations, so the number of standard stages grows
only logarithmically with \(k\).

\subsection*{Example:
  \texorpdfstring{\(k=8\)}{k=8}}

For \(k=8\), the construction reads
\[
\begin{aligned}
  x_1&=\frac{a^2}{b},&
  x_2&=\frac{x_1^2}{b}=\frac{a^4}{b^3},\\
  y_0&=\sqrt{x_2^2+b^2}
      =\frac{\sqrt{a^8+b^8}}{b^3},&
  y_1&=\sqrt{by_0}
      =\frac{(a^8+b^8)^{1/4}}{b},\\
  y_2&=\sqrt{by_1}
      =(a^8+b^8)^{1/8},&
  c&=\frac{ab}{y_2}.
\end{aligned}
\]
Thus only third proportionals, one right triangle, two mean proportionals,
and one fourth proportional are needed.

\section{Intersection with a Lam\'e curve}
\label{sec:lame}

Fix \(h>0\) and consider the Lam\'e arc
\begin{equation}\label{eq:lame}
  \left(\frac{X}{b}\right)^k+
  \left(1-\frac{Y}{h}\right)^k=1,
  \qquad 0\le X\le b,\quad 0\le Y\le h,
\end{equation}
(a translated arc of a Lam\'e curve; for \(k>2\), curves of this type were
popularized by Piet Hein under the name
\emph{superellipses}~\cite{Gridgeman,Gardner}), which runs from the origin
\(O\) to the point \((b,h)\).  Let \(C=(0,h)\), let \(A=(a,0)\), and let \(P\)
be the point where the segment \(AC\) meets the arc (Figure~\ref{fig:lame}).
That segment satisfies \(1-\tfrac{Y}{h}=\tfrac{X}{a}\); substituting into
\eqref{eq:lame} gives
\(\bigl(\tfrac{X}{b}\bigr)^{k}+\bigl(\tfrac{X}{a}\bigr)^{k}=1\).  Thus if
\(P=(X,Y)\), then
\[
  X=\frac{ab}{(a^k+b^k)^{1/k}}=c
\]
for every real \(k>1\), by~\eqref{eq:c-formula}.  The height \(h\) cancels, so
the abscissa of \(P\) is independent of it, and the perpendicular from \(P\) to
the horizontal axis cuts off exactly the segment \(c\).  This is a valid geometric
construction in the broad sense of intersecting a line with a prescribed curve.
It is not, however, a classical straightedge-and-compass construction unless the
required intersection point can itself be constructed using lines and circles.
Gardner is cited for background on the superellipse; we have not found the
particular intersection construction in that source.

\begin{figure}[htbp]
\centering
\begin{tikzpicture}[
  scale=1.15,
  line cap=round,
  line join=round,
  >={Latex[length=2.2mm]},
  point/.style={circle,fill=black,inner sep=1.3pt},
  guide/.style={thin,auxgray},
  surf/.style={very thick,constructionblue},
  answer/.style={very thick,answerorange}
]
  \def\Rr{2} \def\hh{3}
  \draw[guide,->] (-0.6,0)--(3.7,0) node[right]{$X$};
  \draw[guide,->] (0,-0.15)--(0,3.6) node[above]{$Y$};
  \draw[surf] plot[domain=0:1,samples=160,variable=\t]
      ({\Rr*(1-(1-\t)^3)^(1/3)},{\hh*\t});
  \coordinate (C)  at (0,\hh);
  \coordinate (Z)  at (3,0);
  \coordinate (P)  at (1.8343,1.1657);
  \coordinate (Gz) at (1.8343,0);
  \draw[guide] (Z)--(C);
  \draw[answer,densely dashed] (P)--(Gz);
  \node[point] at (0,0) {};
  \node[point] at (C) {};
  \node[point] at (Z) {};
  \node[point] at (P) {};
  \node[point] at (Gz) {};
  \node[below left] at (0,0) {$O$};
  \node[above left,font=\footnotesize] at (C) {$C$};
  \node[below,font=\footnotesize] at (Z) {$A$};
  \node[above right,font=\footnotesize] at (P) {$P$};
  \node[left,font=\footnotesize] at (0,1.55) {$h$};
  \draw[|<->|,thin] (0,-0.55)--(1.8343,-0.55)
       node[midway,fill=white,inner sep=1pt,font=\footnotesize] {$c$};
  \draw[|<->|,thin] (0,-1.08)--(3,-1.08)
       node[midway,fill=white,inner sep=1pt,font=\footnotesize] {$a$};
  \draw[|<->|,thin] (0,\hh+0.36)--(\Rr,\hh+0.36)
       node[midway,fill=white,inner sep=1pt,font=\footnotesize] {$b$};
\end{tikzpicture}
\caption{The Lam\'e arc \eqref{eq:lame}, drawn for \(k=3\).  The arc runs from
\(O\) to \((b,h)\), and \(C=(0,h)\).  The segment from \(A=(a,0)\) to \(C\)
meets the arc at \(P\); the perpendicular from \(P\) to the horizontal axis cuts
off \(c=ab/(a^k+b^k)^{1/k}\).  The height \(h\) cancels, so the abscissa of
\(P\) does not depend on it.  For general \(k\) the arc is neither a line nor a
circle, so this
need not be a classical construction.}
\label{fig:lame}
\end{figure}

For example, when \(k=3\), taking \(a=b\) gives
\[
  c=\frac{b}{\sqrt[3]{2}},
  \qquad\text{so that}\qquad
  \frac{b}{c}=\sqrt[3]{2},
\]
and constructing \(c\) would construct \(\sqrt[3]{2}\), precisely the extra
nonclassical operation that straightedge and compass cannot perform.  When
\(k=4,8,16,\ldots\), the point \(P\) is classically constructible for every
constructible input, by the procedure above, even though a whole Lam\'e curve
is not itself drawn by a finite sequence of line-and-circle operations.

\begin{remark}
If a marked ruler (neusis) is admitted, cube roots (and hence the case
\(k=3\)) become constructible~\cite[Sec.~13.3, pp.~534--535]{DummitFoote}.
Suitable origami folds can, more generally, solve cubic
equations~\cite[pp.~120--121, 129--132]{Alperin}.
Such constructions are interesting, but they belong to a strictly larger class
than the classical unmarked-straightedge-and-compass constructions considered
here.
\end{remark}

\FloatBarrier
%\Needspace*{11\baselineskip}

\section*{Conclusion}

The passage from \(k=2\) to higher exponents does not yield a classical
construction for every \(k\).  Instead it reveals a sharp arithmetic
dichotomy:
\begin{center}
\setlength{\fboxsep}{6pt}
\fbox{\parbox{0.88\linewidth}{\centering
For positive integer \(k\), universal straightedge-and-compass
constructibility holds if and only if \(k\) is a power of \(2\).}}
\end{center}
The first universally constructible exponent beyond \(k=2\) is \(k=4\); no
universal construction exists for \(k=3\); and the quartic procedure iterates
naturally to \(k=8,16,\ldots\).

\end{document}